\documentclass{amsart}

\usepackage[T1]{fontenc}
\usepackage{lmodern}
\usepackage{geometry}
\usepackage{listings}
\usepackage{amsmath,amssymb,amsthm,amsfonts,mathtools}
\usepackage{hyperref}

\DeclareMathOperator{\dist}{dist}

\hypersetup{
  colorlinks=true,
  linkcolor=blue,
  citecolor=blue,
  urlcolor=blue,
  pdftitle={A Spectral Correction for Fourier-Weighted Estimates and a Sharp L2 Constant},
  pdfauthor={David Kalaj}
}

\newcommand{\R}{\mathbb{R}}
\newcommand{\C}{\mathbb{C}}
\newcommand{\dd}{\,\mathrm{d}}
\newcommand{\abs}[1]{\left|#1\right|}

\newcommand{\ip}[2]{\left\langle #1,#2\right\rangle}

\theoremstyle{plain}
\newtheorem{theorem}{Theorem}
\newtheorem{lemma}[theorem]{Lemma}

\theoremstyle{remark}
\newtheorem{remark}[theorem]{Remark}
\newtheorem{example}[theorem]{Example}

\title{The $L^2$ Norm of the Interior Cauchy Transform: Beyond the First Dirichlet Eigenvalue}
\author{David Kalaj}
\address{University of Montenegro, Faculty of Natural Sciences and
Mathematics, Cetinjski put b.b. 81000 Podgorica, Montenegro}
\email{davidk@ucg.ac.me}
\date{}
\subjclass[2020]{Primary 47B38, 35J25; Secondary 42B20, 31B10, 30E20, 35P15}

\begin{document}

\begin{abstract}
We study sharp \(L^2\) bounds for the interior Cauchy transform \(C_D\) on a bounded planar domain \(D\) and clarify its connection with the Dirichlet spectrum. We analyze an approach that replaces fractional Dirichlet powers on \(D\) by Euclidean Fourier multipliers after extension by zero, and show that this substitution can change the optimal constants. In particular, we construct an explicit endpoint counterexample on the unit disk to a Fourier-weighted inequality appearing in \cite{Dostanic1996}. This identifies a gap in the derivation of the conjectured identity \(\|C_D\|_{L^2\to L^2}=2/\sqrt{\lambda_1(D)}\).

We then identify the correct sharp constant for the endpoint Fourier weight $|\xi|^{-1}$ in terms of the top eigenvalue of a natural positive potential-type operator on $D$. Finally, we show that testing $C_D$ on the first Dirichlet eigenfunction already exceeds the spectral threshold, so $\|C_D\|_{L^2\to L^2}>2/\sqrt{\lambda_1(D)}$ for simply connected domains and also for annuli $A(r,R)$, and we prove a rigidity result: equality with the spectral value occurs if and only if $D$ is a disk.
\end{abstract}

\maketitle


\section{Introduction}\label{sec:introduction}

Let $D\subset\C$ be a bounded domain. The (interior) Cauchy transform
\begin{equation}\label{eq:intro-cauchy}
(C_D f)(z):=\frac{1}{\pi}\int_D \frac{f(w)}{z-w}\,dA(w),\qquad z\in D,
\end{equation}
is a basic singular integral operator in planar function theory and provides a
canonical $L^2$--solution operator to $\bar\partial u=f$ on $D$. It also appears
naturally alongside Bergman-type projections and in questions on spectral and
smoothing properties of integral operators (see, for instance,
\cite{AndersonKhavinsonLomonosov1992,Dostanic1996,Kalaj2012AIM,ZhuKalaj2020}).

A fundamental point is that the global Cauchy transform on $\C$ is not bounded on
$L^2(\C)$, yet its restriction to a bounded domain may admit sharp $L^2$ bounds.
In their seminal work \cite{AndersonHinkkanen1989}, Anderson and Hinkkanen
considered the unit disk $\mathbb{D}$ and proved that if $f\in L^2(\mathbb{D})$ is
extended by zero outside $\mathbb{D}$, then $C_\mathbb{D} f\in L^2(\mathbb{D})$ and
\begin{equation}\label{eq:AH}
\|C_\mathbb{D} f\|_{L^2(\mathbb{D})}\le \frac{2}{j_{0,1}}\|f\|_{L^2(\mathbb{D})},
\end{equation}
where $j_{0,1}$ is the first positive zero of the Bessel function $J_0$; moreover,
the constant $2/j_{0,1}$ is sharp. Since $\lambda_1(\mathbb{D})=j_{0,1}^2$ for the
Dirichlet Laplacian on the disk, \eqref{eq:AH} can be rewritten as the spectral
identity
\[
\|C_\mathbb{D}\|_{L^2(\mathbb{D})\to L^2(\mathbb{D})}=\frac{2}{\sqrt{\lambda_1(\mathbb{D})}}.
\]
This striking formula suggests a possible general principle: that the $L^2$ norm
of $C_D$ might be determined by the first Dirichlet eigenvalue $\lambda_1(D)$.

In \cite{Dostanic1996}, a natural extension of this principle was proposed: for
bounded domains with sufficiently regular boundary,
\begin{equation}\label{eq:dost-claim-intro}
\|C_D\|_{L^2(D)\to L^2(D)}=\frac{2}{\sqrt{\lambda_1(D)}}.
\end{equation}
Beyond this conjectural sharp-constant identity, \cite{Dostanic1996} contains
several valuable contributions on the spectral side of the subject, including
results and estimates connected to Dirichlet eigenvalues and their asymptotic
behavior, and it helped stimulate further interest in linking singular integral
bounds to Laplacian spectra. The present paper was directly motivated by this
spectral viewpoint, and our goal is to clarify precisely which parts of this
heuristic picture can be made rigorous.

The reasoning in \cite{Dostanic1996} implicitly uses the heuristic identification
\[
(-\Delta_D)^{-\alpha} f \stackrel{?}{=} \mathcal{F}^{-1}\!\big(|\xi|^{-2\alpha}\big)\,\mathcal{F}(f\mathbf{1}_D),
\]
i.e.\ it treats fractional Dirichlet powers of $-\Delta_D$ as Euclidean Fourier
multipliers after extending $f$ by zero outside $D$. As we explain in this paper,
this Fourier-multiplier formula does not hold in general on bounded domains
without additional structure, and relying on it can lead to incorrect sharp
constants.

\medskip
\noindent\textbf{What we prove.}
Our first contribution is an explicit endpoint counterexample (already for the
unit disk) to the Fourier-weighted inequality that underlies the proof of
\eqref{eq:dost-claim-intro}; see Section~\ref{22}. This pinpoints exactly where
the spectral identification breaks down.

Our second contribution is to give the correct sharp constant for the Fourier
weight $|\xi|^{-1}$ in full generality. We show that the quadratic form
\[
\int_{\R^2}\frac{|\widehat f(\xi)|^2}{|\xi|}\,d\xi
\]
is governed not by $\lambda_1(D)$, but by the positive self-adjoint integral
operator
\[
(S_D f)(x)=\int_D \frac{f(y)}{|x-y|}\,dy,
\]
and the best constant equals $(2\pi)^{-1}\|S_D\|$; see
Theorem~\ref{thm:sharp-constant}.

Finally, returning to the Cauchy transform itself, we show that the disk is the
unique domain for which the spectral value $2/\sqrt{\lambda_1(D)}$ can be
attained. More precisely, for a simply connected bounded domain $D$ with smooth
boundary and first Dirichlet eigenfunction $u$, we prove
\[
\frac{\|C_D u\|_{L^2(D)}}{\|u\|_{L^2(D)}}>\frac{2}{\sqrt{\lambda_1(D)}},
\]
and consequently
\[
\|C_D\|_{L^2(D)\to L^2(D)}>\frac{2}{\sqrt{\lambda_1(D)}}
\qquad\text{unless $D$ is a disk.}
\]

Remark that the inequality \[
\|C_D\|_{L^2(D)\to L^2(D)}\ge \frac{2}{\sqrt{\lambda_1(D)}},
\]  has been already proved in \cite{AndersonHinkkanen1989}.

Equivalently,
\[
\|C_D\|_{L^2(D)\to L^2(D)}=\frac{2}{\sqrt{\lambda_1(D)}} \quad\Longleftrightarrow\quad D \text{ is a disk}.
\]
At the heart of the proof is an $L^2$ orthogonal decomposition of $C_D u$ into
the $L^2$--minimal $\bar\partial$--solution and a nontrivial holomorphic
component, together with a rigidity argument showing that the holomorphic
component vanishes identically only in the disk case.

\medskip
\noindent\textbf{Organization.}
In Section~\ref{22} we present the Fourier-weighted counterexample on the disk. Moreover in the same section we identify the correct sharp constant via the operator $S_D$.
The final part of the paper, Section~\ref{five} establishes the strict inequality for the Cauchy
transform on noncircular domains and proves the disk rigidity statement.
\section{Background, motivation, and main results}\label{sec:intro}

Let $D\subset\C$ be a bounded domain. We study the (interior) Cauchy transform
\begin{equation}\label{eq:Cauchy-def}
(C_D f)(z):=\frac{1}{\pi}\int_D \frac{f(w)}{z-w}\,\dd A(w),\qquad z\in D,
\end{equation}
initially defined for $f\in C_0^\infty(D)$. The operator $C_D$ is a first-order
complex singular integral and is a natural $L^2$--solution operator for
$\bar\partial u=f$ on $D$, with connections to Bergman-type operators and
spectral questions (see, e.g., \cite{Dostanic1996,KalajVujadinovic2017CMB,Vujadinovic2025}).

\subsection{A Fourier-multiplier pitfall on bounded domains}\label{subsec:pitfall}

A key step in \cite{Dostanic1996} is to extend \(f\in L^2(D)\) by zero to \(\R^2\) and
then to treat fractional Dirichlet powers of \(-\Delta_D\) as though they were given by the
free-space Fourier multiplier \( |\xi|^{-2\alpha} \). This leads one to consider quadratic
forms of the type
\begin{equation}\label{eq:fourier-weighted-form}
\int_{\R^2}\frac{|\widehat f(\xi)|^2}{|\xi|^{2\alpha}}\,d\xi,
\end{equation}
which correspond to the operator \((-\Delta)^{-\alpha}\) acting on the zero extension.
However, \eqref{eq:fourier-weighted-form} is not, in general, the Dirichlet quadratic form
for \((-\Delta_D)^{-\alpha}\). In particular, there is no reason to expect the sharp bound
\begin{equation}\label{eq:wrong-fourier-ineq}
\int_{\R^2}\frac{|\widehat f(\xi)|^2}{|\xi|^{2\alpha}}\,d\xi
\;\le\; \lambda_1(D)^{-\alpha}\,\|f\|_{L^2(D)}^2,
\end{equation}
where \(\lambda_1(D)\) denotes the first Dirichlet eigenvalue.

At the endpoint $\alpha=\tfrac12$ this becomes the (wrong) inequality
\begin{equation}\label{eq:wrong-fourier-ineq-endpoint}
\int_{\R^2}\frac{|\widehat f(\xi)|^2}{|\xi|}\,d\xi
\;\le\; \lambda_1(D)^{-1/2}\,\|f\|_{L^2(D)}^2,
\end{equation}
and we give an explicit counterexample on the unit disk to the Fourier-weighted estimate
invoked in \cite[Lemma~2.2]{Dostanic1996}.

Motivated by the sharp disk result of Anderson--Hinkkanen, it was asserted in
\cite{Dostanic1996} that the $L^2$ operator norm of $C_D$ is determined exactly
by $\lambda_1(D)$:

\begin{theorem}[Claimed in \cite{Dostanic1996}]\label{thm:Dostanic-claimed}
Let $D\subset\C$ be a bounded domain with piecewise $C^1$ boundary. Then $C_D$
extends boundedly to $L^2(D)$ and
\begin{equation}\label{eq:Dostanic-claimed}
\|C_D\|_{L^2(D)\to L^2(D)}=\frac{2}{\sqrt{\lambda_1(D)}},
\end{equation}
where $\lambda_1(D)>0$ is the smallest eigenvalue of the Dirichlet problem
\begin{equation}\label{eq:Dirichlet-eigen}
\begin{cases}
-\Delta u=\lambda u & \text{in } D,\\
u|_{\partial D}=0. &
\end{cases}
\end{equation}
\end{theorem}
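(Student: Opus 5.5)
Since the rest of the paper argues that this claimed identity is false in general, I do not plan to prove it as stated. The plan is to split it into three parts: the part that is true (boundedness and the inequality $\ge$), the part that fails (the inequality $\le$), and a proof that it fails.

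\emph{Boundedness.} The kernel satisfies $|K(z,w)|=\frac{1}{\pi|z-w|}$. By a rearrangement argument,
\[
\sup_{z\in D}\frac1\pi\int_D\frac{dA(w)}{|z-w|}\le \frac1\pi\int_{|w|<\rho}\frac{dA(w)}{|w|}=2\rho,\qquad \pi\rho^2=|D|.
\]
The Schur test then gives $\|C_D\|\le 2\sqrt{|D|/\pi}$, so $C_D$ extends boundedly to $L^2(D)$ for every bounded $D$. No regularity of $\partial D$ is needed.

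\emph{Lower bound $\ge$.} For $\varphi\in C_0^\infty(D)$ the generalized Cauchy--Pompeiu formula gives $C_D(\bar\partial\varphi)=\varphi$. Hence
\[
\|\varphi\|\le\|C_D\|\,\|\bar\partial\varphi\|,\qquad \|\bar\partial\varphi\|^2=\tfrac14\|\nabla\varphi\|^2
\]
after integrating by parts. Taking the infimum of the Rayleigh quotient over $C_0^\infty(D)$, which is dense in $H_0^1(D)$, gives $\lambda_1(D)\ge 4/\|C_D\|^2$. This is the inequality $\|C_D\|\ge 2/\sqrt{\lambda_1(D)}$, recovering the Anderson--Hinkkanen bound.

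\emph{Upper bound $\le$: where the claim breaks.} The argument in \cite{Dostanic1996} obtains $\le$ by bounding $\|C_D f\|^2$ by a Fourier-weighted form of type \eqref{eq:fourier-weighted-form} and then invoking \eqref{eq:wrong-fourier-ineq-endpoint}. I would instead test $C_D$ on the first Dirichlet eigenfunction $u$. Write
\[
C_Du=v+h,
\]
where $v$ is the $L^2$-minimal solution of $\bar\partial v=u$, so $v\perp A^2(D)$, and $h\in A^2(D)$. Then $\|C_Du\|^2=\|v\|^2+\|h\|^2$.

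For the minimal part, I expect to show $\|v\|^2=\frac{4}{\lambda_1}\|u\|^2$ using $v=\partial G$-type identities together with $-\Delta u=\lambda_1u$. The argument would be $4\partial\bar\partial=\Delta$, so a natural candidate is $v=-\frac{4}{\lambda_1}\partial u$, and one then checks its orthogonality to $A^2(D)$ via $u|_{\partial D}=0$.

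So equality in \eqref{eq:Dostanic-claimed} would force $h\equiv0$, i.e.\ $C_Du=-\frac4{\lambda_1}\partial u$ in $D$. Since $C_Du$ is continuous across $\partial D$ and holomorphic outside $\overline D$ with decay $O(1/z)$, this would say that $\partial u$ extends holomorphically to the exterior and vanishes at $\infty$. Because $u=0$ on $\partial D$, we have $\partial u=\frac12(\partial_n u)\,\bar{\tau}$-type boundary data. So $h\equiv0$ becomes an overdetermined condition: it amounts to $|\nabla u|$ being constant on $\partial D$, which is Serrin's condition, forcing $D$ to be a disk.

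The main obstacle is this rigidity step: converting $h\equiv0$ into a Serrin-type overdetermined problem, in particular for multiply connected domains. I would first treat simply connected smooth $D$ via conformal mapping and the exterior Cauchy integral, then handle annuli by explicit Bessel computations. The conclusion is that \eqref{eq:Dostanic-claimed} holds precisely for disks, and otherwise only $\ge$ survives, with strict inequality.
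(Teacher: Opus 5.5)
Your boundedness argument (Schur's test with the bathtub bound $\sup_{z}\frac1\pi\int_D|z-w|^{-1}\,dA(w)\le 2\sqrt{|D|/\pi}$) is correct. So is your lower bound via $C_D(\bar\partial\varphi)=\varphi$ on $C_0^\infty(D)$ and the Rayleigh quotient. That lower bound takes a different and more general route than the paper, which obtains $\ge$ from the eigenfunction test and so needs boundary regularity for the integration by parts. Your decomposition $C_Du=v_0+h$, with $v_0=-\frac{4}{\lambda_1}\partial_z u\perp A^2(D)$ and $\|v_0\|=\frac{2}{\sqrt{\lambda_1}}\|u\|$, coincides with Theorem~\ref{thm:strict-eigtest}. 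So does the deduction that equality in \eqref{eq:Dostanic-claimed} forces $h\equiv0$. You skip the paper's disk counterexample to the Fourier-weighted inequality. That example only shows Dostani\'c's argument breaks, not that the identity is false, so nothing is lost.

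The gap is the rigidity step. The claim that $h\equiv0$ ``amounts to $|\nabla u|$ being constant on $\partial D$'' is not what your setup gives. Suppose $h\equiv0$. Continue $\partial_z u$ to $\Omega$ by $-\frac{\lambda_1}{4}$ times the exterior Cauchy integral of $u$, and set $\omega(\zeta)=(\partial_z u)(Z(\zeta))Z'(\zeta)$. Since $\partial_z u\,dz=\frac{i}{2}(\partial_n u)\,ds$ on $\partial D$, the function $\zeta\omega(\zeta)$ is bounded and holomorphic on $|\zeta|>1$ and real on $|\zeta|=1$. Hence it is a constant $c$. Taking moduli on $|\zeta|=1$ yields only
\[
|\nabla u(Z(\zeta))|\,|Z'(\zeta)|=2|c|,\qquad\text{i.e.}\qquad |\nabla u|=2|c|\,|\Phi'|\quad\text{on }\partial D .
\]
So the Neumann data is proportional to the density of harmonic measure at $\infty$; equivalently, the logarithmic potential of $u\,dA$ is constant on $\partial D$. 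This is Serrin's condition only if $|\Phi'|$ is constant on $\partial D$, and that is already equivalent to $D$ being a disk. Appealing to Serrin is therefore circular. A genuinely different overdetermined argument would be needed, for instance moving planes for the $C^1$ function equal to $u$ in $D$ and to $-2|c|\log|\Phi|$ in $\Omega$, which solves $-\Delta\Psi=\lambda_1\Psi_+$ on $\R^2$. Be aware that the paper's own rigidity proof makes the same leap at ``unwinding the definition of $\omega$ shows that $|Z'(\zeta)|$ is constant''. It only gives $|\nabla u|\,|Z'|$ constant, so it cannot fill this in.

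What actually refutes \eqref{eq:Dostanic-claimed} is a single non-disk domain on which $h\not\equiv0$ is checked directly, and the paper supplies two. For $A(r,R)$, radial symmetry forces $h=c/z$ with $c=2rf'(r)/\lambda_1\neq0$ by Hopf's lemma. For the unit square, $h$ extends continuously to $\overline D$ with $v_0(0)=0$ but $\Re\,(C_Du)(0)<0$ at the corner. Your annulus step, once carried out, would therefore give a valid disproof. Your stronger conclusion, that the identity holds precisely for disks, is not established by the proposal.
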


The proof in \cite{Dostanic1996} uses the Fourier-weighted inequality mentioned
in \S\ref{subsec:pitfall}. Since that inequality is false, the argument cannot
establish \eqref{eq:Dostanic-claimed}.

\section{A counterexample to the \eqref{eq:wrong-fourier-ineq}  at $\alpha=\tfrac12$ and the correct sharp constant for the $\abs{\xi}^{-1}$ Fourier weight}\label{22}

Throughout, we use the Fourier normalization
\[
\widehat f(\xi)=\frac1{2\pi}\int_{\R^2}e^{-ix\cdot\xi}f(x)\,\dd x,
\qquad
f(x)=\frac1{2\pi}\int_{\R^2}e^{ix\cdot\xi}\widehat f(\xi)\,\dd\xi .
\]
Let $D=\{x\in\R^2:\ |x|\le 1\}$ be the unit disk and set $f=\mathbf 1_D$.
Then
\[
\int_{|x|\le 1} e^{-i x\cdot \xi}\,\dd x
= \int_0^1\int_0^{2\pi} e^{-i r|\xi|\cos\theta}\,r\,\dd\theta\,\dd r
= 2\pi\int_0^1 J_0(r|\xi|)\,r\,\dd r
= 2\pi\,\frac{J_1(|\xi|)}{|\xi|},
\]
and hence
\begin{equation}\label{eq:ft-indicator}
\widehat f(\xi)=\frac{J_1(|\xi|)}{|\xi|}.
\end{equation}
Writing $r=|\xi|$ and using $\dd\xi=r\,\dd r\,\dd\theta$, we obtain
\begin{align}\label{eq:LHS2}
\int_{\R^2}\frac{\abs{\widehat f(\xi)}^2}{|\xi|}\,\dd\xi
&=\int_0^{2\pi}\int_0^\infty
\frac{\big(J_1(r)^2/r^2\big)}{r}\,r\,\dd r\,\dd\theta
=2\pi\int_0^\infty \frac{J_1(r)^2}{r^2}\,\dd r.
\end{align}
Using the classical identity
\[
\int_0^\infty \frac{J_1(r)^2}{r^2}\,\dd r=\frac{4}{3\pi},
\]
it follows that
\begin{equation}\label{eq:LHS}
\int_{\R^2}\frac{\abs{\widehat f(\xi)}^2}{|\xi|}\,\dd\xi=\frac{8}{3}.
\end{equation}
On the other hand,
\[
\int_D |f(x)|^2\,\dd x = |D|=\pi,
\qquad
\lambda_1(D)=j_{0,1}^2,
\]
so
\begin{equation}\label{eq:RHS}
\lambda_1(D)^{-1/2}\int_D |f(x)|^2\,\dd x
=\frac{\pi}{j_{0,1}}<\frac{8}{3}.
\end{equation}
Therefore
\[
\int_{\R^2}\frac{\abs{\widehat f(\xi)}^2}{|\xi|}\,\dd\xi
>\lambda_1(D)^{-1/2}\int_D \abs{f(x)}^2\,\dd x,
\]
and the estimate \cite[Lemma~2.2]{Dostanic1996}  fails on the unit disk for $\alpha=\tfrac12$.

Here, \(J_k\) denotes the Bessel function of the first kind of order \(k\), defined by the series
\[
J_k(x)=\sum_{m=0}^{\infty}\frac{(-1)^m}{m!\,(m+k)!}\left(\frac{x}{2}\right)^{2m+k},
\qquad k=0,1,2,\dots
\]

\(j_{0,1}\) denotes the first positive zero of \(J_0\), i.e.
\[
J_0\!\left(j_{0,1}\right)=0, \qquad 0<j_{0,1}<j_{0,2}<\cdots,
\]
and numerically
\[
j_{0,1}\approx 2.4048255577.
\]

\begin{theorem}[Sharp $L^2$ constant for the $|\xi|^{-1}$ Fourier weight]\label{thm:sharp-constant}
Let $D\subset\R^2$ be bounded and let $f\in L^2(D)$, extended by $0$ to $\R^2$.
Define the Fourier transform by
\[
\widehat f(\xi)=\frac1{2\pi}\int_{\R^2}e^{-ix\cdot\xi}f(x)\,\dd x,
\qquad
f(x)=\frac1{2\pi}\int_{\R^2}e^{ix\cdot\xi}\widehat f(\xi)\,\dd\xi .
\]
Let $S_D:L^2(D)\to L^2(D)$ be the integral operator
\[
(S_D f)(x)=\int_D\frac{f(y)}{|x-y|}\,\dd y,\qquad x\in D.
\]
Then $S_D$ is positive, self-adjoint and compact on $L^2(D)$ and for every
$f\in L^2(D)$,
\begin{equation}\label{eq:quadratic-form}
\int_{\R^2}\frac{|\widehat f(\xi)|^2}{|\xi|}\,\dd\xi
=\frac{1}{2\pi}\,\ip{S_D f}{f}_{L^2(D)}.
\end{equation}
Consequently, the sharp constant in
\[
\int_{\R^2}\frac{|\widehat f(\xi)|^2}{|\xi|}\,\dd\xi
\le C\,\|f\|_{L^2(D)}^2
\qquad (f\in L^2(D))
\]
is
\[
C_{\mathrm{sharp}}(D)=\frac{1}{2\pi}\,\|S_D\|_{L^2(D)\to L^2(D)}
=\frac{1}{2\pi}\,\lambda_{\max}(S_D).
\]
Moreover, by Schur's test,
\[
C_{\mathrm{sharp}}(D)\le
\frac{1}{2\pi}\sup_{x\in D}\int_D \frac{1}{|x-y|}\,\dd y.
\]
\end{theorem}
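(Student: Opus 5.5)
The plan is to derive the identity \eqref{eq:quadratic-form} from the Fourier transform of the Riesz kernel, and then read off every other claim from the spectral theory of compact self-adjoint operators. The key input is the distributional identity in $\R^2$: the function $|x|^{-1}$ has Fourier transform (in the paper's normalization $\widehat g(\xi)=\frac1{2\pi}\int e^{-ix\cdot\xi}g$) equal to $|\xi|^{-1}$. This is the Riesz potential formula $\widehat{|x|^{-(n-s)}}=c_{n,s}|\xi|^{-s}$ with $n=2$, $s=1$. I would verify the constant directly, for instance by pairing $|x|^{-1}$ with the Gaussian $e^{-|x|^2/2}$, which is its own transform, and checking that both sides give $\int_{\R^2}|x|^{-1}e^{-|x|^2/2}\,dx=2\pi\sqrt{\pi/2}$.

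With that convention, the convolution theorem reads $\widehat{g*h}=2\pi\,\widehat g\,\widehat h$. For $f\in C_c^\infty(D)$ I would compute with Parseval:
\[
\ip{S_Df}{f}=\int_{\R^2}(|\cdot|^{-1}*f)\,\bar f\,dx=\int_{\R^2}2\pi\,|\xi|^{-1}|\widehat f(\xi)|^2\,d\xi .
\]
Here I must watch the constant carefully. If the $2\pi$ lands on the other side, the identity would read $\ip{S_Df}{f}=2\pi\int|\widehat f|^2/|\xi|$, which is exactly the stated \eqref{eq:quadratic-form}. As a sanity check I would use the disk computation \eqref{eq:LHS}: the left side of \eqref{eq:quadratic-form} is $8/3$, and $\iint_{D\times D}|x-y|^{-1}\,dx\,dy=16/3$ for the unit disk, so the normalization is in fact $\int|\widehat f|^2/|\xi|=\frac{1}{2\pi}\ip{S_Df}{f}$ only if the check works out. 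Otherwise the stated constant has to be adjusted. To justify the pairing, I would write $|\xi|^{-1}=\int_0^\infty e^{-t|\xi|^2}\,t^{-1/2}\,dt/\sqrt\pi$ (subordination), apply Plancherel to the heat kernels for each $t$, and use Tonelli, since all integrands are nonnegative. I expect this constant bookkeeping to be the main obstacle. The identity then extends from $C_c^\infty(D)$ to all of $L^2(D)$ by density, because both sides are continuous in $L^2$: the left side by the next step, and the right side by Fatou together with the identity itself.

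For the operator properties, I would argue as follows.
\begin{itemize}
\item \emph{Boundedness and compactness.} The kernel satisfies $\sup_x\int_D|x-y|^{-1}\,dy\le 2\sqrt{\pi|D|}$, by rearrangement onto a disk of area $|D|$. So Schur's test gives boundedness and the stated bound. Since the kernel is weakly singular on a bounded set, it lies in $L^p(D\times D)$ for every $p<2$. Compactness then follows by truncating the kernel to $\min(|x-y|^{-1},N)$, which yields Hilbert--Schmidt operators, and noting that the error has Schur norm at most $\sup_x\int_{|x-y|<1/N}|x-y|^{-1}\,dy=2\pi/N\to0$.
\item \emph{Self-adjointness.} This is immediate from the real symmetric kernel.
\item \emph{Positivity.} This follows from \eqref{eq:quadratic-form}.
\end{itemize}

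Finally, for a positive compact self-adjoint operator, $\sup_{\|f\|=1}\ip{S_Df}{f}=\|S_D\|=\lambda_{\max}(S_D)$, and the supremum is attained at a top eigenfunction. Dividing by $2\pi$ gives $C_{\mathrm{sharp}}$, and the Schur bound transfers directly.
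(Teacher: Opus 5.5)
Your argument is correct, and your displayed Parseval line, $\ip{S_Df}{f}=2\pi\int_{\R^2}|\xi|^{-1}|\widehat f(\xi)|^2\,d\xi$, is already exactly the stated identity. Nothing can ``land on the other side'', so you should drop the hedge and commit to the constant.

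The doubt comes from your sanity-check value, which is off by a factor of $\pi$. For the unit disk and $|x|=\rho$, polar coordinates centred at $x$ give $\int_{\mathbb D}|x-y|^{-1}\,dy=\int_0^{2\pi}\bigl(\sqrt{1-\rho^2\sin^2\phi}-\rho\cos\phi\bigr)\,d\phi=4E(\rho)$, where $E$ is the complete elliptic integral of the second kind. Since $\int_0^1E(\rho)\rho\,d\rho=2/3$, one gets $\iint_{\mathbb D\times\mathbb D}|x-y|^{-1}\,dx\,dy=16\pi/3$. (The number $16/3$ is the Rayleigh quotient $\ip{S_D\mathbf 1_{\mathbb D}}{\mathbf 1_{\mathbb D}}/\|\mathbf 1_{\mathbb D}\|^2$.) Then $\frac{1}{2\pi}\cdot\frac{16\pi}{3}=\frac83$, which matches the value obtained from $\widehat{\mathbf 1_{\mathbb D}}(\xi)=J_1(|\xi|)/|\xi|$. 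Taken at face value, your $16/3$ would have led you to ``adjust'' the constant by a spurious factor of $\pi$. Carrying out your subordination computation also returns exactly the factor $\frac{1}{2\pi}$.

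Relative to the paper, the core computation is the same: the transform of $|x|^{-1}$ is $|\xi|^{-1}$, combined with the convolution theorem carrying the factor $2\pi$. Your supporting arguments differ, and they are the sound ones.

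\emph{Justifying the identity.} The paper applies Plancherel to $\mathcal F^{-1}(|\xi|^{-1}\widehat f)$, which is not in $L^2(\R^2)$ whenever $\int_D f\neq 0$, because $|\xi|^{-2}$ is not integrable at the origin in the plane. Your subordination formula avoids this. It even works directly for every $f\in L^2(D)$: use Tonelli on the Fourier side, and Fubini (rather than Tonelli) on the spatial side, justified by $\iint|x-y|^{-1}|f(x)||f(y)|\,dx\,dy\le\|S_D\|\,\|f\|^2$. So the density step is not needed.

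\emph{Compactness.} The paper asserts $|x-y|^{-1}\in L^2(D\times D)$, i.e.\ that $S_D$ is Hilbert--Schmidt. This is false in two dimensions, since $|z|^{-2}$ is not locally integrable in $\R^2$. Your truncation argument (bounded truncated kernel, remainder of Schur norm at most $2\pi/N$) is the correct replacement.

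\emph{Positivity.} The paper deduces positivity from the kernel being real and symmetric, which yields only self-adjointness. Obtaining positivity from the quadratic-form identity, as you do, is what is actually required.

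Your explicit bound $\sup_x\int_D|x-y|^{-1}\,dy\le 2\sqrt{\pi|D|}$ is a small additional gain.
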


\begin{proof}
With the above normalization, Plancherel holds in the form
$\|f\|_{L^2(\R^2)}=\|\widehat f\|_{L^2(\R^2)}$, and the convolution identity is
$\widehat{g*h}=2\pi\,\widehat g\,\widehat h$.
Let $k(x)=|x|^{-1}$. In the sense of tempered distributions in $\R^2$,
\[
\widehat k(\xi)=\frac{1}{|\xi|}.
\]
Define $T f := \mathcal F^{-1}\!\big(|\xi|^{-1}\widehat f(\xi)\big)$. Then
\[
\widehat{Tf}(\xi)=\frac{1}{|\xi|}\widehat f(\xi)
=\widehat k(\xi)\,\widehat f(\xi)
=\frac{1}{2\pi}\widehat{k*f}(\xi),
\]
hence $Tf=\frac{1}{2\pi}(k*f)$. Since $f$ is supported in $D$,
\[
Tf(x)=\frac{1}{2\pi}\int_D\frac{f(y)}{|x-y|}\,\dd y=\frac{1}{2\pi}(S_D f)(x)
\quad\text{for a.e. }x\in D.
\]
Therefore, using Plancherel, let $g(\xi):=|\xi|^{-1}\widehat f(\xi)$. Then $\widehat{\mathcal F^{-1}g}=g$, so
\[
\big\langle \mathcal F^{-1}g,\,f\big\rangle_{L^2(\R^2)}
=\big\langle g,\,\widehat f\big\rangle_{L^2(\R^2)}
=\int_{\R^2}\frac{1}{|\xi|}\,\widehat f(\xi)\,\overline{\widehat f(\xi)}\,d\xi
=\int_{\R^2}\frac{|\widehat f(\xi)|^2}{|\xi|}\,d\xi.
\]
Thus

\[
\int_{\R^2}\frac{|\widehat f(\xi)|^2}{|\xi|}\,\dd\xi
=\ip{\mathcal F^{-1}\!\Big(\frac{1}{|\xi|}\widehat f\Big)}{f}_{L^2(\R^2)}
=\ip{Tf}{f}_{L^2(\R^2)}
=\frac{1}{2\pi}\ip{S_D f}{f}_{L^2(D)},
\]
which proves \eqref{eq:quadratic-form}.

The operator $S_D$ is bounded on $L^2(D)$ and is self-adjoint and positive since
its kernel $|x-y|^{-1}$ is real and symmetric. It is compact because
$|x-y|^{-1}\in L^2(D\times D)$ (the singularity is integrable in two dimensions),
so $S_D$ is Hilbert--Schmidt. Hence

Using \eqref{eq:quadratic-form}, the sharp constant in the Fourier inequality is
$C_{\mathrm{sharp}}(D)=\frac{1}{2\pi}\|S_D\|=\frac{1}{2\pi}\lambda_{\max}(S_D)$.

Finally, Schur's test gives
\[
\|S_D\|_{2\to2}\le \sup_{x\in D}\int_D\frac{1}{|x-y|}\,\dd y,
\]
and multiplying by $1/(2\pi)$ yields the stated bound for $C_{\mathrm{sharp}}(D)$. \paragraph{Schur's test (used here).}
Let $(Tf)(x)=\int_D K(x,y)f(y)\,dy$ with $K\ge 0$. Schur's test with the weight $m\equiv 1$ gives
\[
\|T\|_{2\to 2}\le
\Bigl(\sup_{x\in D}\int_D K(x,y)\,dy\Bigr)^{1/2}
\Bigl(\sup_{y\in D}\int_D K(x,y)\,dx\Bigr)^{1/2}.
\]
If $K(x,y)=K(y,x)$ is symmetric, then the two suprema coincide, hence
\[
\|T\|_{2\to 2}\le \sup_{x\in D}\int_D K(x,y)\,dy.
\]
Applying this to $S_D$ with $K(x,y)=1/|x-y|$ yields
\[
\|S_D\|_{2\to 2}\le \sup_{x\in D}\int_D\frac{1}{|x-y|}\,dy,
\]
and multiplying by $1/(2\pi)$ gives the corresponding bound for
$C_{\mathrm{sharp}}(D)=\frac{1}{2\pi}\|S_D\|.$
\end{proof}
\begin{remark} By the proof of the previous theorem, if $D=\mathbb{D}$ \[C\le \frac{1}{2\pi}\sup_{x\in D}\int_{\mathbb{D}}\frac{1}{|x-y|}\,dy=\frac{1}{2\pi}\int_{\mathbb{D}}\frac{1}{|y|}\,dy=1.
\]
It can be proved by using discrete methods and numerical analysis that the sharp constant $C$ in the inequality \[
\int_{\R^2}\frac{\abs{\widehat f(\xi)}^2}{|\xi|}\,\dd\xi
\le C\int_{\mathbb{D}} \abs{f(x)}^2\,\dd x,
\] for the unit disk $\mathbb{D}$ is equal to $C\approx 0.852$ which is larger than $8/(3\pi)\approx 0.848$ obtained from the concrete example above.
\end{remark}
\section{First-eigenfunction test and disk rigidity}\label{five}
\begin{theorem}\label{thm:strict-eigtest}
Let $D\subset\C$ be a bounded simply connected domain with piecewise $C^1$ boundary.
Let $u$ be the (real-valued) first Dirichlet eigenfunction on $D$, normalized arbitrarily, with
\[
-\Delta u=\lambda_1(D)\,u \quad \text{in }D,\qquad u|_{\partial D}=0.
\]
Set
\[
v_0:=-\frac{4}{\lambda_1(D)}\,\partial_z u,
\qquad\text{and}\qquad
w:=C_D u.
\]
Then $h:=w-v_0$ is holomorphic on $D$ and
\begin{equation}\label{eq:pythagoras}
\|w\|_{L^2(D)}^2=\|v_0\|_{L^2(D)}^2+\|h\|_{L^2(D)}^2.
\end{equation}
Moreover,
\begin{equation}\label{eq:v0-threshold}
\|v_0\|_{L^2(D)}=\frac{2}{\sqrt{\lambda_1(D)}}\,\|u\|_{L^2(D)}.
\end{equation}
Consequently,
\[
\frac{\|C_D u\|_{L^2(D)}}{\|u\|_{L^2(D)}}\ge \frac{2}{\sqrt{\lambda_1(D)}},
\]
with strict inequality whenever $h\not\equiv 0$ (equivalently, $C_Du\not\equiv v_0$). In particular,
\[
\|C_D\|_{L^2(D)\to L^2(D)}\ge \frac{2}{\sqrt{\lambda_1(D)}},
\]
and it is strictly larger if $D$ is not a disk (see Theorem~\ref{thm:rigidity-disk} below).
\end{theorem}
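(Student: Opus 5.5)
We verify four facts in turn: $\bar\partial$-equations for $w$ and $v_0$, holomorphy of $h$, the orthogonality $v_0\perp A^2(D)$ giving \eqref{eq:pythagoras}, and the norm identity \eqref{eq:v0-threshold}. Write $\lambda=\lambda_1(D)$ and use the convention $\partial_z\partial_{\bar z}=\tfrac14\Delta$.

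\textbf{Step 1: $\bar\partial$-equations.} Since $\frac{1}{\pi z}$ is the fundamental solution of $\partial_{\bar z}$, we have $\partial_{\bar z}(C_Du)=u$ in $D$ in the distributional sense. Because $u\in L^2$, $w$ lies in $W^{1,2}_{loc}$, and by elliptic regularity $u$ is smooth in $D$, so this holds classically. On the other hand,
\[
\partial_{\bar z}v_0=-\frac{4}{\lambda}\,\partial_{\bar z}\partial_z u=-\frac{1}{\lambda}\Delta u=u .
\]
Hence $\partial_{\bar z}h=0$, and $h$ is holomorphic in $D$. Moreover $w\in L^2(D)$, since $C_D$ is bounded: its kernel is integrable on the bounded set $D$, so Schur's test applies. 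Also $v_0\in L^2(D)$, since $u\in H^1_0(D)$. Therefore $h\in A^2(D)$, the Bergman space.

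\textbf{Step 2: orthogonality.} For $g\in A^2(D)$ we want
\[
\langle v_0,g\rangle=-\frac4\lambda\int_D \partial_z u\,\bar g\,dA=0 .
\]
Formally, integrating by parts gives $\int_D u\,\partial_z\bar g=\int_D u\,\overline{\partial_{\bar z}g}=0$, and the boundary term vanishes because $u|_{\partial D}=0$. To justify this for a general $g\in A^2$, note that $u\in H^1_0(D)$ is an $H^1$-limit of functions $\varphi_n\in C_0^\infty(D)$. For each $n$, $\int\partial_z\varphi_n\,\bar g=-\int\varphi_n\,\overline{\partial_{\bar z}g}=0$, with no boundary term since $\varphi_n$ has compact support, and $\partial_z\varphi_n\to\partial_z u$ in $L^2$. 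This gives $\langle v_0,h\rangle=0$, and \eqref{eq:pythagoras} follows from Pythagoras applied to $w=v_0+h$. The point of this approximation argument is that it avoids needing any boundary regularity of $g$; simple connectivity is not needed here and is presumably reserved for the rigidity theorem.

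\textbf{Step 3: the norm of $v_0$.} For real $u$ we have $|\partial_z u|^2=\tfrac14|\nabla u|^2$. Hence
\[
\|v_0\|^2=\frac{16}{\lambda^2}\cdot\frac14\int_D|\nabla u|^2=\frac{4}{\lambda^2}\int_D|\nabla u|^2=\frac4{\lambda^2}\,\lambda\|u\|^2=\frac4\lambda\|u\|^2 .
\]
Here $\int|\nabla u|^2=\lambda\|u\|^2$ is the weak eigenvalue equation tested against $u\in H^1_0$, which is valid without boundary smoothness. This proves \eqref{eq:v0-threshold}.

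\textbf{Step 4: conclusion.} Combining Steps 2 and 3,
\[
\|C_Du\|^2=\frac4\lambda\|u\|^2+\|h\|^2 ,
\]
which gives the inequality for the ratio, strict exactly when $h\not\equiv0$. The operator-norm bound follows by taking $f=u$. The statement that the inequality is strict for non-disks is deferred to Theorem~\ref{thm:rigidity-disk}.

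The only delicate points are the justification of $\partial_{\bar z}C_Du=u$ and the integration by parts against an arbitrary Bergman function, since with piecewise $C^1$ boundary we cannot assume $u\in C^1(\bar D)$. Both are handled by the density of $C_0^\infty(D)$ in $H^1_0(D)$ and by distributional derivatives, so I expect no real obstacle.
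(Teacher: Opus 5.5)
Your proof is correct and follows the paper's route. You use the same splitting $w=v_0+h$ with $\partial_{\bar z}v_0=u$, the same identity $\int_D|\nabla u|^2\,dA=\lambda_1(D)\|u\|_{L^2(D)}^2$ for $\|v_0\|_{L^2(D)}$, and the same $\bar\partial$-integration by parts for $v_0\perp h$. Your $H^1_0$-density argument for that orthogonality is actually a tidier justification than the paper's boundary integral $\int_{\partial D}u\,h\,dz$, which tacitly requires $h$ to behave reasonably up to a merely piecewise $C^1$ boundary.
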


\begin{proof}
Since $\Delta=4\,\partial_z\partial_{\bar z}$ and $-\Delta u=\lambda_1 u$, we have
\[
\partial_{\bar z}v_0
=-\frac{4}{\lambda_1}\partial_{\bar z}\partial_z u
=-\frac{1}{\lambda_1}\Delta u
=u.
\]
On the other hand, $\partial_{\bar z}(C_D u)=u$ in $D$, hence
\[
\partial_{\bar z}(w-v_0)=0,
\]
so $h:=w-v_0$ is holomorphic on $D$.

Next, compute $\|v_0\|_2$. Since $u$ is real-valued, $|\partial_z u|^2=\frac14|\nabla u|^2$, and therefore
\[
\|v_0\|_{L^2(D)}^2
=\frac{16}{\lambda_1^2}\int_D |\partial_z u|^2\,dA
=\frac{4}{\lambda_1^2}\int_D |\nabla u|^2\,dA.
\]
By Green's identity and $u|_{\partial D}=0$,
\[
\int_D |\nabla u|^2\,dA=\int_D u(-\Delta u)\,dA=\lambda_1\|u\|_{L^2(D)}^2,
\]
which gives \eqref{eq:v0-threshold}.

Finally, we show $v_0\perp h$ in $L^2(D)$. Using $\overline{\partial_z u}=\partial_{\bar z}u$,
\[
\langle v_0,h\rangle_{L^2(D)}
=-\frac{4}{\lambda_1}\int_D (\partial_{\bar z}u)\,h\,dA.
\]
A $\bar\partial$--integration by parts yields
\[
\int_D (\partial_{\bar z}u)\,h\,dA
=\frac{i}{2}\int_{\partial D}u\,h\,dz-\int_D u\,(\partial_{\bar z}h)\,dA=0,
\]
because $u|_{\partial D}=0$ and $\partial_{\bar z}h=0$. Hence $\langle v_0,h\rangle=0$, and
\eqref{eq:pythagoras} follows. The stated inequalities are immediate from
\eqref{eq:pythagoras} and \eqref{eq:v0-threshold}.
\end{proof}


\begin{lemma}[Exterior holomorphy and decay of the Cauchy transform]\label{lem:ext-holo-decay}
Let $D\subset\C$ be bounded and let $f\in L^1(D)$. Define
\[
F(z):=\frac1\pi\int_D \frac{f(w)}{z-w}\,dA(w),\qquad z\in\Omega:=\C\setminus\overline D.
\]
Then $F$ is holomorphic on $\Omega$ and, as $z\to\infty$,
\[
F(z)=\frac{1}{\pi}\left(\frac{M_0}{z}+\frac{M_1}{z^2}+\frac{M_2}{z^3}+\cdots\right),
\qquad
M_k:=\int_D f(w)\,w^k\,dA(w),
\]
in particular $F(z)=O(1/z)$.
\end{lemma}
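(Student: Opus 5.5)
The plan is to verify holomorphy on $\Omega$ by differentiating under the integral sign. Then obtain the expansion at infinity from a geometric series, uniformly in $w\in D$.

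\textbf{Holomorphy.} Fix $z_0\in\Omega$ and let $\delta:=\dist(z_0,\overline D)>0$. For $|z-z_0|<\delta/2$ and $w\in D$ we have $|z-w|\ge\delta/2$. Hence the integrand $f(w)/(z-w)$ is dominated by $2|f(w)|/\delta\in L^1(D)$, and it is holomorphic in $z$ for each fixed $w$. Two routes then finish this step.

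\begin{itemize}
\item Continuity of $F$ follows from dominated convergence. Fubini's theorem gives $\oint_\gamma F\,dz=0$ for every triangle $\gamma$ in the disk $|z-z_0|<\delta/2$, since $\oint_\gamma (z-w)^{-1}dz=0$ for each $w$. Morera's theorem then yields holomorphy.
\item Alternatively, difference quotients are dominated by $4|f(w)|/\delta^2$, so one may differentiate directly under the integral sign.
\end{itemize}

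\textbf{Expansion at infinity.} Let $R:=\sup_{w\in D}|w|<\infty$, which is finite since $D$ is bounded. For $|z|>R$ and $w\in D$, write
\[
\frac{1}{z-w}=\frac1z\cdot\frac{1}{1-w/z}=\sum_{k\ge0}\frac{w^k}{z^{k+1}}.
\]
The $k$-th term is bounded by $|z|^{-1}(R/|z|)^k$, so the series converges uniformly in $w\in D$. Multiplying by $f(w)\in L^1(D)$ preserves summability: $\sum_k\int_D|f||w|^k|z|^{-k-1}\,dA\le \|f\|_{L^1}/(|z|-R)$. Fubini–Tonelli, or dominated convergence applied to the partial sums, therefore lets us integrate term by term. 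This gives
\[
F(z)=\frac1\pi\sum_{k\ge0}\frac{M_k}{z^{k+1}},\qquad |M_k|\le R^k\|f\|_{L^1(D)}.
\]
The series converges absolutely for $|z|>R$.

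\textbf{Decay bound.} For the $O(1/z)$ statement, bound directly:
\[
|F(z)|\le \frac{\|f\|_{L^1}}{\pi(|z|-R)}\le \frac{2\|f\|_{L^1}}{\pi|z|}\qquad\text{for } |z|\ge 2R.
\]

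There is no real obstacle here. The only point needing care is justifying the interchange of sum and integral, which is handled by the uniform geometric bound above. Note also that $\Omega$ may have bounded components (for example, holes of $D$). Holomorphy there is still covered by the local argument, while the expansion concerns only the unbounded component $|z|>R$.
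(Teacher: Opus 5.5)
Your proof is correct and takes essentially the same route as the paper. Holomorphy comes from locally dominating the kernel and its difference quotients by a multiple of $|f|$. The expansion at infinity comes from the geometric series for $(z-w)^{-1}$, with an integrable dominating bound justifying termwise integration, and the decay from the estimate $|F(z)|\le \|f\|_{L^1(D)}/(\pi(|z|-R))$. The differences are cosmetic: you offer Morera as an alternative route, and you use Tonelli to get the expansion on all of $|z|>R$ rather than the paper's $|z|>2R$.
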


\begin{proof}
Let \(D\subset\mathbb C\) be measurable and bounded, \(f\in L^{1}(D)\), and define
\[
F(z):=\frac1\pi\int_D \frac{f(w)}{z-w}\,dA(w),\qquad z\in \Omega:=\mathbb C\setminus \overline D.
\]

Fix \(z_0\in\Omega\). Set
\[
\delta:=\dist(z_0,D)>0.
\]
Choose \(r\in(0,\delta/2)\). Then for all \(z\in B(z_0,r)\) and all \(w\in D\),
\[
|z-w|\ge |z_0-w|-|z-z_0|\ge \delta-r \ge \delta/2.
\]
Hence the kernels are uniformly dominated on \(B(z_0,r)\):
\[
\Big|\frac{f(w)}{z-w}\Big|\le \frac{2|f(w)|}{\delta},\qquad
\Big|\frac{f(w)}{(z-w)^2}\Big|\le \frac{4|f(w)|}{\delta^2},
\]
and the dominating functions \(2|f|/\delta\), \(4|f|/\delta^2\) are in \(L^{1}(D)\).

For each fixed \(w\), the map \(z\mapsto (z-w)^{-1}\) is holomorphic on \(B(z_0,r)\) with derivative
\(- (z-w)^{-2}\). The domination above allows differentiation under the integral sign (e.g. via dominated
convergence applied to difference quotients), giving
\[
F'(z)= -\frac1\pi\int_D \frac{f(w)}{(z-w)^2}\,dA(w),\qquad z\in B(z_0,r).
\]
Since \(z_0\) was arbitrary, \(F\) is holomorphic on \(\Omega\).

Let \(D\) be bounded and pick \(R>0\) such that \(D\subset\{\,|w|\le R\,\}\). If \(|z|>2R\), then
\(|w/z|\le 1/2\) for \(w\in D\), and
\[
\frac1{z-w}=\frac1z\,\frac1{1-w/z}
=\frac1z\sum_{k=0}^\infty \Big(\frac{w}{z}\Big)^k,
\]
with absolute (and uniform in \(w\in D\)) convergence.

Moreover,
\[
\sum_{k=0}^\infty \Big|\frac{w^k}{z^{k+1}}\Big|
\le \frac1{|z|}\sum_{k=0}^\infty \Big(\frac{R}{|z|}\Big)^k
\le \frac1{|z|}\cdot \frac1{1-1/2}=\frac{2}{|z|}.
\]
Thus
\[
\Big|\frac{f(w)}{z-w}\Big|
\le |f(w)|\sum_{k=0}^\infty \Big|\frac{w^k}{z^{k+1}}\Big|
\le \frac{2|f(w)|}{|z|},
\]
which is integrable over \(D\). This justifies termwise integration:
\[
F(z)=\frac1\pi\sum_{k=0}^\infty \frac{1}{z^{k+1}}\int_D f(w)\,w^k\,dA(w)
=\frac1\pi\left(\frac{M_0}{z}+\frac{M_1}{z^2}+\frac{M_2}{z^3}+\cdots\right),
\]
where
\[
M_k:=\int_D f(w)\,w^k\,dA(w).
\]
In particular,
\[
|F(z)|\le \frac1\pi\int_D \frac{|f(w)|}{|z|-|w|}\,dA(w)
\le \frac1\pi\cdot \frac{1}{|z|-R}\,\|f\|_{L^1(D)}
=O\!\left(\frac1{|z|}\right),
\]
so \(F(z)=O(1/z)\) as \(z\to\infty\), and the Laurent series above is valid for \(|z|>2R\).

If \(M_0=M_1=\cdots=M_{m-1}=0\), then the expansion starts at \(z^{-(m+1)}\) and
\(F(z)=O(|z|^{-(m+1)})\).

\end{proof}


\begin{theorem}[Rigidity]\label{thm:rigidity-disk}
Let $D \subset \C$ be a bounded, simply connected domain with $C^{1,\alpha}$ boundary.
Let $u$ be the first Dirichlet eigenfunction,
\[
-\Delta u=\lambda_1(D)\,u \quad \text{in } D, \qquad u=0 \quad \text{on } \partial D,
\]
and define
\[
v_0 := -\frac{4}{\lambda_1(D)}\,\partial_z u .
\]
Then
\[
C_D u \equiv v_0 \quad \text{in } D
\]
if and only if $D$ is a disk.
\end{theorem}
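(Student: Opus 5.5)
The plan is to turn the identity $C_Du\equiv v_0$ into an overdetermined boundary condition for $u$ and then invoke Serrin-type rigidity. Set $W:=C_Du$ on all of $\C$, with $u$ extended by zero. Then $W$ is continuous on $\C$, since $u$ is bounded and the kernel $1/(z-w)$ is locally integrable; in fact $W$ is H\"older continuous. By Lemma~\ref{lem:ext-holo-decay}, $W$ is holomorphic on $\Omega=\C\setminus\overline D$ with $W(z)=\frac1\pi\bigl(M_0/z+M_1/z^2+\cdots\bigr)$, where $M_k=\int_D u\,w^k\,dA$.

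\emph{Direction ``disk $\Rightarrow$ identity''.} Let $D=\{|z-a|<R\}$. Then $u=J_0(j_{0,1}|z-a|/R)$ is radial. Since $v_0$ is continuous up to $\partial D$ by $C^{1,\alpha}$ regularity, the holomorphic function $h=W-v_0$ lies in the Bergman space of the disk. Moreover, the Pythagoras identity of Theorem~\ref{thm:strict-eigtest} shows that $h$ is the orthogonal projection of $W$ onto holomorphic $L^2$ functions. To see $h\equiv 0$, I would test against the monomials $(z-a)^n$. We have $\langle W,(z-a)^n\rangle=\langle C_Du,(z-a)^n\rangle=-\langle u, C_D^*(z-a)^n\rangle$, and for a radial $u$ this can be computed by angular integration: $W(z)=\frac{2}{z-a}\int_0^{|z-a|}u(\rho)\rho\,d\rho$ inside the disk. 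This is exactly $-\frac{4}{\lambda_1}\partial_z u$, after using $(\rho u')'=-\lambda_1\rho u$. So the disk case is a direct radial computation.

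\emph{Direction ``identity $\Rightarrow$ disk''.} Suppose $W=v_0$ in $D$. Since $u\in C^{1,\alpha}(\overline D)$, continuity of $W$ across $\partial D$ gives the boundary values $W|_{\partial D}=-\frac4{\lambda_1}\partial_z u|_{\partial D}$. Now $u=0$ on $\partial D$, so $\nabla u=\partial_\nu u\,\nu$, and hence $\partial_zu=\frac12\partial_\nu u\,\overline{\nu}$, with $\nu$ the outward unit normal viewed as a complex number. Thus the holomorphic exterior function $W$, which vanishes at $\infty$, has boundary values $-\frac{2}{\lambda_1}\partial_\nu u\,\overline{\nu}$.

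The idea is to use the Cauchy integral to encode this through moments. For every polynomial $p$, Cauchy's theorem on $\Omega$ together with the decay $W=O(1/z)$ relates $\int_{\partial D}W p\,dz$ to the residue at infinity, which equals $2\pi i$ times a coefficient of $p$ paired with $M_k/\pi$. Alternatively, one can simply note that $W|_\Omega$ and $v_0|_{\partial D}$ determine each other. On the other hand, writing $dz=i\nu\,ds$ gives $\int_{\partial D}\overline\nu\,\partial_\nu u\,p\,i\nu\,ds=i\int_{\partial D}\partial_\nu u\,p\,ds=i\int_D\Delta u\,p\,dA=-i\lambda_1 M(p)$ for holomorphic $p$. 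These two computations should match identically and so give no information; this indicates the real content lies in the $\bar z$-moments. I would therefore next use test functions $\bar z^m p(z)$, or equivalently exploit the fact that $W$ extends to $D$ \emph{both} as $C_Du$ and as $v_0$. This should yield a quadrature identity of the form $\int_D \phi\,dA\propto\int_{\partial D}\phi\,|\partial_\nu u|\,ds$ for all harmonic $\phi$, with the resulting condition forcing $|\partial_\nu u|$ to be constant on $\partial D$.

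Once $|\nabla u|$ is known to be constant on $\partial D$, Serrin's theorem for $-\Delta u=\lambda_1u$ with $u>0$ in $D$ (the nonlinearity $f(u)=\lambda_1u$ is Lipschitz, and $C^{1,\alpha}$ boundary suffices in the Serrin--Weinberger or moving-plane versions) forces $D$ to be a disk. The main obstacle is the middle step: extracting the Neumann-constancy from the identity $C_Du=v_0$. The trivial moment identities above cancel, so the key is to exploit the conjugate structure. Concretely, I would first try computing $\int_D C_Du\,\overline{q}\,dA$ for polynomials $q$ in two ways: once via the adjoint of $C_D$, and once via integration by parts on $-\frac4{\lambda_1}\partial_zu$, which produces the boundary term $\int_{\partial D}u\ldots=0$ plus $\int_D u\,\overline{\partial_{\bar z} q}$. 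Comparing the two computations would give that $\int_D u\,\overline{g}\,dA$ has a specific form for every $g=C_D^*q$, which should translate into the desired Serrin condition.
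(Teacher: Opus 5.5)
Your ``disk $\Rightarrow$ identity'' direction is fine; it is the same radial computation the paper uses. The converse has a genuine gap, located exactly at the step you flag as the main obstacle, and your proposed approach cannot close it. The hypothesis $C_Du\equiv v_0$ does not yield constancy of $|\partial_\nu u|$ except by way of the conclusion itself. To see what the hypothesis actually says, insert $\Delta u=-\lambda_1 u$ and $u|_{\partial D}=0$ (with $u>0$) into Green's representation formula and apply $\partial_z$. For $z\in D$ this gives
\[
C_Du(z)-v_0(z)=\frac{1}{\pi\lambda_1}\int_{\partial D}\frac{|\partial_\nu u(w)|}{z-w}\,ds(w).
\]
A real function annihilated by $\partial_z$ is constant. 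Hence $C_Du\equiv v_0$ holds iff the logarithmic potential of $\mu=|\partial_\nu u|\,ds$ is constant on $\overline D$. By uniqueness of the equilibrium measure, this holds iff $\mu$ is a multiple of the harmonic measure of $\Omega$ at $\infty$:
\[
|\nabla u|=\frac{\lambda_1}{2\pi}\Bigl(\int_D u\,dA\Bigr)|\Phi'|\qquad\text{on }\partial D,
\]
where $\Phi$ is the exterior conformal map. This condition is \emph{equivalent} to the hypothesis. So no further moment identities (test functions $\bar z^m p$, $C_D^*q$, or anything else) can extract more from it. Under it, $|\partial_\nu u|$ is constant iff $|\Phi'|$ is constant on $\partial D$, iff $D$ is a disk. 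The step meant to feed Serrin's theorem is therefore equivalent to the theorem itself, and once you had it, Serrin would be unnecessary.

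The quadrature identity with $dA$ that you anticipate is also not what comes out. It would say that $|\partial_\nu u|\,ds$ is proportional to the Neumann data of the torsion function, which is not a constancy statement either. What is actually needed is a rigidity theorem for a different overdetermined problem: a first Dirichlet eigenfunction whose Neumann data is proportional to the equilibrium density. That is not Serrin's problem.

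The paper argues differently. It extends $\partial_zu$ holomorphically to $\Omega$ via the exterior Cauchy transform and pulls back by $Z=\Phi^{-1}$. It then uses that $\partial_zu\,dz=\tfrac{i}{2}\partial_\nu u\,ds$ is purely imaginary on $\partial D$, and obtains $\zeta\,(\partial_zu)(Z(\zeta))\,Z'(\zeta)\equiv c$ by Schwarz reflection and Liouville. On $|\zeta|=1$ this reads $\tfrac12\,\partial_\nu u(Z(\zeta))\,|Z'(\zeta)|=c$. Since $|Z'|=1/|\Phi'\circ Z|$, this is exactly the condition displayed above.

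Be aware that the paper's next sentence, asserting that $|Z'|$ is then constant on the unit circle, tacitly requires $|\nabla u|$ to be constant on $\partial D$. So you cannot borrow the paper's ending to fill your gap. In both arguments the missing ingredient is a proof that no non-circular domain satisfies $|\nabla u|\propto|\Phi'|$ on $\partial D$.
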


\begin{proof}
Assume first that $C_Du\equiv v_0$ in $D$, where $v_0=-(4/\lambda_1)\partial_z u$.
Let $\Omega:=\C\setminus\overline D$ and consider the exterior Cauchy transform
\[
F(z):=\frac{1}{\pi}\int_D \frac{u(w)}{z-w}\,dA(w),\qquad z\in\Omega.
\]
By Lemma~\ref{lem:ext-holo-decay}, $F$ is holomorphic on $\Omega$ and satisfies $F(z)=O(1/z)$ as $z\to\infty$.

For smooth $u$ and $\partial D$, the same integral defines a continuous function on $\C$, and its restriction
to $D$ coincides with $C_Du$. Hence the identity $C_Du=v_0$ on $D$ implies that $v_0$ has a holomorphic
extension to $\Omega$, given by
\[
\partial_z u(w) := -\frac{\lambda_1}{4}\,F(w),\qquad w\in\Omega.
\]
In particular, this extension is holomorphic on $\Omega$ and
\begin{equation}\label{eq:dz-decay}
\partial_z u(w)=O(1/w)\qquad (w\to\infty).
\end{equation}

\paragraph{Boundary identity.}
On $\partial D$ we have $u=0$, so $\nabla u$ is normal:
\[
\nabla u = (\partial_n u)\,n,
\]
where $n=n_x+i n_y$ is the unit outward normal. Using $\partial_z=\tfrac12(\partial_x-i\partial_y)$, we obtain
\[
\partial_z u
=\frac12(\partial_n u)\,\overline{n}
\qquad \text{on }\partial D.
\]
Let $\gamma=\gamma(s)$ be an arclength parametrization of $\partial D$. Then $|\gamma'(s)|=1$ and
\[
n(s)=-i\,\gamma'(s),
\qquad\text{hence}\qquad
\overline{n(s)}=i\,\overline{\gamma'(s)}=\frac{i}{\gamma'(s)}.
\]
Consequently,
\[
\partial_z u(\gamma(s))\,d\gamma
=\partial_z u(\gamma(s))\,\gamma'(s)\,ds
=\frac12(\partial_n u)\,\overline{n}\,\gamma'(s)\,ds
=\frac{i}{2}(\partial_n u)\,ds,
\]
so the complex differential $\partial_z u\,d\gamma$ has vanishing real part on $\partial D$
(equivalently, it is purely imaginary along $\partial D$).

\paragraph{Rigidity via the exterior conformal map.}
Let $\Phi:\Omega\to\{|\zeta|>1\}$ be the exterior conformal map normalized by $\Phi(\infty)=\infty$,
and write
\[
Z(\zeta):=\Phi^{-1}(\zeta).
\]
Define
\[
\omega(\zeta):=\bigl(\partial_z u\bigr)\bigl(Z(\zeta)\bigr)\,Z'(\zeta),
\qquad |\zeta|>1.
\]
Then $\omega$ is holomorphic for $|\zeta|>1$, and \eqref{eq:dz-decay} together with the normalization
\[
Z(\zeta)=a\zeta+b+O(1/\zeta)\qquad (\zeta\to\infty)
\]
implies
\[
\omega(\zeta)=O(1/\zeta)\qquad (\zeta\to\infty),
\]
so $F_1(\zeta):=\zeta\,\omega(\zeta)$ is holomorphic on $|\zeta|>1$ and bounded at $\infty$.

On $|\zeta|=1$ we have $Z(\zeta)\in\partial D$ and $dZ=Z'(\zeta)\,d\zeta$, hence
\[
\omega(\zeta)\,d\zeta
=\bigl(\partial_z u\bigr)\bigl(Z(\zeta)\bigr)\,dZ.
\]
The boundary identity above shows that this differential has vanishing real part on $|\zeta|=1$; therefore,
for $\zeta=e^{it}$ (so $d\zeta=i\zeta\,dt$) we obtain
\[
\Re\!\bigl(F_1(e^{it})\,i\,dt\bigr)=0
\qquad\Longrightarrow\qquad
F_1(e^{it})\in\R,\quad t\in\R.
\]
Thus $F_1$ extends holomorphically across $|\zeta|=1$ by Schwarz reflection and is an entire bounded
function; hence $F_1$ is constant, say $F_1(\zeta)\equiv c\in\R$, and thus
\[
\omega(\zeta)=\frac{c}{\zeta}.
\]
Unwinding the definition of $\omega$ shows that $|Z'(\zeta)|$ is constant on $|\zeta|=1$.
Since $\log|Z'(\zeta)|$ is harmonic on $|\zeta|>1$, constant boundary values imply $\log|Z'|$ is constant
on $|\zeta|>1$, so $Z'(\zeta)$ is constant and $Z(\zeta)$ is a M\"obius map. Consequently $\partial D$ is a circle
and $D$ is a disk.

\noindent\textbf{Conversely.}
If $D$ is a disk, the first eigenfunction is radial and a direct computation (e.g.\ in polar coordinates)
shows that $C_D u = -\frac{4}{\lambda_1(D)}\,\partial_z u$ holds identically.
\end{proof}
\begin{remark}
The rigidity theorem above  is formulated for smooth domains (e.g.\ $C^{1,\alpha}$) in order to use
boundary identities and conformal mapping. The next example has a different role: it is a purely
interior computation on the unit square, showing directly that $C_Du\not\equiv v_0$ by evaluating at
a single point. In particular, it yields
\[
\frac{\|C_Du\|_{L^2(D)}}{\|u\|_{L^2(D)}}>\frac{2}{\sqrt{\lambda_1(D)}},
\]
and hence $\|C_D\|_{L^2(D)\to L^2(D)}>\frac{2}{\sqrt{\lambda_1(D)}}$. Thus the claimed norm identity \eqref{eq:dost-claim-intro}
cannot hold for the square.
\end{remark}
\begin{example}\label{ex:square}
Let $D=(0,1)^2$, for which $\lambda_1(D)=2\pi^2$, and take
\[
u(x,y)=\sin(\pi x)\sin(\pi y).
\]
Then $u_z(0,0)=0$, hence $v_0(0)=0$. On the other hand, evaluating the Cauchy transform at $z=0$ gives
\[
(C_Du)(0)=-\frac1\pi\int_0^1\int_0^1\frac{\sin(\pi x)\sin(\pi y)}{x+iy}\,dy\,dx,
\]
and writing $(x+iy)^{-1}=(x-iy)/(x^2+y^2)$ shows
\[
\Re\,(C_Du)(0)
=-\frac1\pi\int_0^1\int_0^1\frac{x\,\sin(\pi x)\sin(\pi y)}{x^2+y^2}\,dy\,dx<0.
\]
Thus $(C_Du)(0)\neq v_0(0)$, hence $h\not\equiv 0$ in Theorem~\ref{thm:strict-eigtest}, and therefore
\[
\frac{\|C_Du\|_{L^2(D)}}{\|u\|_{L^2(D)}}>\frac{2}{\sqrt{\lambda_1(D)}}=\frac{\sqrt2}{\pi}.
\]
\end{example}
We finish this note by stating a proving a similar rigity result for a circular annuli
\begin{theorem}
Let $A=A(r,R)=\{z\in\C:\ r<|z|<R\}$ with $0<r<R$, and let $f$ be the first Dirichlet eigenfunction of
$-\Delta$ on $A$, i.e.
\[
-\Delta f=\lambda_1(A)\,f\quad \text{in }A,\qquad f=0\quad \text{on }\partial A,
\]
normalized arbitrarily and chosen positive in $A$. Define the  Cauchy transform
\[
C_A g(z):=\frac1\pi\int_A \frac{g(w)}{z-w}\,d\mu(w),\qquad d\mu=du\,dv,
\]
and set
\[
v_0:=-\frac{4}{\lambda_1(A)}\,\partial_z f.
\]
Then $f$ is radial and
\[
C_A f(z)=\frac{2}{z}\int_{r}^{|z|} f(\rho)\,\rho\,d\rho,\qquad z\in A.
\]
Moreover, one has the exact identity
\[
C_A f(z)= -\frac{4}{\lambda_1(A)}\,\partial_z f(z)\;+\;\frac{2r\,f'(r)}{\lambda_1(A)}\,\frac{1}{z},
\qquad z\in A,
\]
and the orthogonality relation
\[
\left\langle v_0,\frac1z\right\rangle_{L^2(A)}=0.
\]
Consequently,
\[
\|C_A f\|_{L^2(A)}^2=\|v_0\|_{L^2(A)}^2
+\left|\frac{2r\,f'(r)}{\lambda_1(A)}\right|^2\left\|\frac1z\right\|_{L^2(A)}^2
>\|v_0\|_{L^2(A)}^2,
\]
and since $\Delta=4\partial_z\partial_{\bar z}$ one has
\[
\|v_0\|_{L^2(A)}^2
=\frac{16}{\lambda_1(A)^2}\|\partial_z f\|_{L^2(A)}^2
=\frac{16}{\lambda_1(A)^2}\cdot\frac14\|\nabla f\|_{L^2(A)}^2
=\frac{4}{\lambda_1(A)}\|f\|_{L^2(A)}^2.
\]
Therefore,
\[
\frac{\|C_A f\|_{L^2(A)}}{\|f\|_{L^2(A)}}
>\frac{\|v_0\|_{L^2(A)}}{\|f\|_{L^2(A)}}
=\frac{2}{\sqrt{\lambda_1(A)}}.
\]
\end{theorem}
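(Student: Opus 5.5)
Radiality comes first. The first Dirichlet eigenvalue on $A$ is simple and rotations preserve both $A$ and the Dirichlet problem, so $f\circ R_\theta$ is again a first eigenfunction and hence equals $\pm f$. Positivity forces the plus sign, so $f$ is radial. We write $f(z)=f(|z|)$, where $f(\rho)$ solves
\[
f''+\rho^{-1}f'+\lambda_1 f=0 \quad\text{on } (r,R),\qquad f(r)=f(R)=0.
\]

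Next we compute $C_Af$ for radial $f$. In polar coordinates $w=\rho e^{i\phi}$ we use the classical angular integral
\[
\frac1{2\pi}\int_0^{2\pi}\frac{d\phi}{z-\rho e^{i\phi}}=
\begin{cases}
1/z, & \rho<|z|,\\
0, & \rho>|z|.
\end{cases}
\]
This follows from the geometric series, or from the residue theorem in $\zeta=e^{i\phi}$. It gives
\[
C_Af(z)=\frac1\pi\int_r^{|z|}2\pi\,\frac{f(\rho)\rho}{z}\,d\rho=\frac{2}{z}\int_r^{|z|}f(\rho)\rho\,d\rho,
\]
with Fubini justified because the kernel is locally integrable.

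For the exact identity we use the ODE in the form $(\rho f')'=-\lambda_1\rho f$ and integrate from $r$ to $s=|z|$:
\[
\int_r^{s}f\rho\,d\rho=-\frac{s f'(s)-rf'(r)}{\lambda_1}.
\]
For radial $f$ we have $\partial_z f=\tfrac12 f'(s)\,\bar z/s$, so
\[
\frac{2}{z}\cdot\frac{-sf'(s)}{\lambda_1}=-\frac{2f'(s)s}{\lambda_1 z}=-\frac{2f'(s)\bar z}{\lambda_1 s}=-\frac4{\lambda_1}\partial_z f.
\]
The remaining term is $\frac{2rf'(r)}{\lambda_1}\frac1z$, which is the claimed identity.

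For the orthogonality relation, $\langle v_0,1/z\rangle$ is a multiple of $\int_A f'(s)\,(\bar z/s)\,(1/\bar z)\,dA=2\pi\int_r^R f'(s)\,ds=2\pi\big(f(R)-f(r)\big)=0$. One could also invoke the integration by parts from Theorem~\ref{thm:strict-eigtest}, since $1/z$ is holomorphic on $A$ and $f$ vanishes on $\partial A$. Pythagoras then gives the norm splitting, and $\|v_0\|^2=\frac4{\lambda_1}\|f\|^2$ is exactly the computation in Theorem~\ref{thm:strict-eigtest}, which never used simple connectivity. We also note that $\|1/z\|^2=2\pi\log(R/r)>0$.

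The only delicate point is strictness, which requires $f'(r)\neq0$. Since $f>0$ in $A$ and $f(r)=0$, we have $f'(r)\ge0$. If $f'(r)=0$, then $f(r)=f'(r)=0$, and uniqueness for the regular ODE on $[r,R]$ (note $r>0$) would force $f\equiv0$, a contradiction. Alternatively, Hopf's lemma gives $f'(r)>0$ directly. Hence the coefficient $\frac{2rf'(r)}{\lambda_1}$ is nonzero, the inequality $\|C_Af\|>\|v_0\|$ is strict, and dividing by $\|f\|$ yields the final bound. I expect the angular integral and the sign bookkeeping in $\partial_z f$ to be the main places where care is needed; everything else is routine.
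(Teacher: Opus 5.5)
Your proof is correct. Most of it matches the paper's: radiality from simplicity of $\lambda_1(A)$, the angular residue integral, the direct orthogonality computation, and the Green's-identity evaluation of $\|v_0\|$. The real difference is how you obtain the exact identity $C_Af=v_0+\frac{2rf'(r)}{\lambda_1}\frac1z$.

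You integrate $(\rho f')'=-\lambda_1\rho f$ from $r$ to $|z|$, which gives the decomposition and the coefficient in one step. The paper argues structurally instead:
\begin{itemize}
\item $h=C_Af-v_0$ is holomorphic on $A$, because both terms satisfy $\partial_{\bar z}(\cdot)=f$;
\item radiality gives the equivariance $h(e^{it}z)=e^{-it}h(z)$, which forces $h=c/z$;
\item $c$ is read off from the limit $|z|\downarrow r$, using $C_Af(z)\to 0$ there and $zv_0(z)=-2\rho f'(\rho)/\lambda_1$.
\end{itemize}

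Your route is shorter and fully explicit, and it shows exactly where the extra term comes from: it is the boundary term $\rho f'(\rho)$ at the inner radius. For a disk the lower limit is $0$, where $\rho f'(\rho)\to 0$, so the same one-line computation gives $C_Du=v_0$. That is the converse direction of Theorem~\ref{thm:rigidity-disk}, which the paper only describes as a direct computation. The paper's route instead places the annulus inside the general splitting of Theorem~\ref{thm:strict-eigtest} into $v_0$ plus a holomorphic remainder, and explains structurally why only $1/z$ can occur.

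Finally, your proof that $f'(r)\neq 0$ uses uniqueness for the ODE, which is regular on $[r,R]$ because $r>0$. This is a more elementary substitute for the Hopf lemma the paper cites; both are valid.
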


\begin{proof}
By simplicity of $\lambda_1(A)$ and rotational invariance of $A$, the first eigenfunction is radial.
For radial $f$, write $z=\rho e^{i\theta}$ and $w=se^{i\varphi}$. Then
\[
C_A f(z)=\frac1\pi\int_r^R\int_0^{2\pi}\frac{f(s)\,s}{\rho e^{i\theta}-s e^{i\varphi}}\,d\varphi\,ds.
\]
A residue computation yields
\[
\int_0^{2\pi}\frac{d\varphi}{\rho e^{i\theta}-s e^{i\varphi}}
=
\begin{cases}
\displaystyle \frac{2\pi}{\rho e^{i\theta}}=\frac{2\pi}{z}, & s<\rho,\\[4pt]
0, & s>\rho,
\end{cases}
\]
hence
\[
C_A f(z)=\frac{2}{z}\int_r^{|z|} f(s)\,s\,ds.
\]
In particular, $\lim_{|z|\downarrow r}C_A f(z)=0$. Set $\lambda=\lambda_1(A)$ and
$v_0=-(4/\lambda)\partial_z f$. Using $\Delta=4\partial_z\partial_{\bar z}$ and $-\Delta f=\lambda f$,
we have $\bar\partial v_0=f$ in $A$. Also $\bar\partial(C_A f)=f$ in $A$, hence
$h:=C_A f-v_0$ is holomorphic in $A$. Since $f$ is radial, $h(e^{it}z)=e^{-it}h(z)$, thus $h(z)=c/z$
for some $c\in\C$. Multiplying by $z$ and letting $|z|\downarrow r$ gives
\[
c=\lim_{|z|\downarrow r}\bigl(zC_A f(z)-zv_0(z)\bigr)=-\lim_{|z|\downarrow r}zv_0(z).
\]
For radial $f$,
\[
\partial_z f(\rho e^{i\theta})=\frac{e^{-i\theta}}{2}f'(\rho),
\qquad\Rightarrow\qquad
zv_0(z)=-\frac{2\rho}{\lambda}f'(\rho),
\]
hence $c=(2r/\lambda)f'(r)$ and
\[
C_A f=v_0+\frac{c}{z}.
\]
Next,
\[\begin{split}
\left\langle v_0,\frac1z\right\rangle
&=\int_0^{2\pi}\int_r^R
\left(-\frac{2}{\lambda}e^{-i\theta}f'(\rho)\right)
\left(\frac{e^{i\theta}}{\rho}\right)\rho\,d\rho\,d\theta
\\&=-\frac{4\pi}{\lambda}\int_r^R f'(\rho)\,d\rho
=-\frac{4\pi}{\lambda}\bigl(f(R)-f(r)\bigr)=0.
\end{split}
\]
Therefore $v_0\perp 1/z$ in $L^2(A)$ and
\[
\|C_A f\|_2^2=\|v_0\|_2^2+|c|^2\left\|\frac1z\right\|_2^2.
\]
Since $f'(r)\neq 0$ by Hopf's lemma (because $\Delta f<0$), $c\neq 0$, and moreover
\[
\left\|\frac1z\right\|_{L^2(A)}^2=\int_A \frac{1}{|z|^2}\,dA
=2\pi\int_r^R \frac{1}{\rho}\,d\rho=2\pi\log\frac{R}{r}>0,
\]
we obtain $\|C_A f\|_2^2>\|v_0\|_2^2$.

Finally,
\[
\|\partial_z f\|_2^2=\frac14\|\nabla f\|_2^2
=\frac14\lambda\|f\|_2^2,
\]
which yields $\|v_0\|_2^2=(4/\lambda)\|f\|_2^2$ and the stated strict inequality.
\end{proof}

\begin{remark}
It is also worth emphasizing that there is a closely related Cauchy-type transform arising from the
Dirichlet problem for the Laplacian for which the sharp constant \(\frac{2}{\sqrt{\lambda_1(\Omega)}}\)
\emph{does} hold on arbitrary bounded domains. Following Astala--Iwaniec--Martin, let \(G(z,\tau)\) denote
the Green function of a bounded domain \(\Omega\), and for \(\varphi\in L^2(\Omega)\) define the Green
potential
\begin{equation}\label{eq:green-potential}
v=L_\Delta(\varphi)=\int_\Omega G(z,\tau)\,\varphi(\tau)\,d\tau,
\end{equation}
so that \(v|_{\partial\Omega}=0\) in the sense of distributions. The associated Cauchy transform is
\begin{equation}\label{eq:assoc-cauchy}
\mathcal{C}_\Delta(\varphi)(z)=v_z=\int_\Omega G_z(z,\tau)\,\varphi(\tau)\,d\tau .
\end{equation}
It was proved in \cite{Kalaj2012IEOT} that
\begin{equation}\label{eq:kalaj-norm}
\|\mathcal{C}_\Delta\|_{L^2(\Omega)\to L^2(\Omega)}=\frac{2}{\sqrt{\lambda_1(\Omega)}},
\end{equation}
where \(\lambda_1(\Omega)\) is the first Dirichlet eigenvalue of \(-\Delta\) on \(\Omega\).
\end{remark}

\noindent\textbf{Further background.}
For spectral/norm estimates for the interior Cauchy transform and its relation to Laplace/Dirichlet
spectral data (including sharp constants and asymptotics), see
\cite{ArazyKhavinson1992,Dostanic1998,Dostanic2005IEOT,Dostanic2007JFA,Liu2016,Khavinson2018Conm720}.
For the potential-theoretic variational viewpoint (Poincar\'e-type problems) closely related to
these operator norms, see \cite{KhavinsonPutinarShapiro2007}.
A general reference on the Cauchy transform and its role in planar potential theory is \cite{Bell2015}.
For recent singular-number asymptotics results see \cite{Vujadinovic2025} .
\section*{Acknowledgements}
The  author gratefully acknowledges financial support from the Ministry of Education, Science and Innovation of Montenegro through the grants \emph{``Mathematical Analysis, Optimisation and Machine Learning''} and \emph{``Complex-analytic and geometric techniques for non-Euclidean machine learning: theory and applications.''} He is also grateful to D.~Khavinson and P.~Melentijevi\'c for useful comments and encouragement.
 \paragraph{Competing interests.}
The author declares that there are no competing interests.

\paragraph{\bf Data availability.}
Data sharing is not applicable to this article as no datasets were generated or analysed during the current study.

\paragraph{\bf Ethical approval.}
Not applicable.

\paragraph{\bf Consent to participate.}
Not applicable.

\paragraph{\bf Consent for publication.}
Not applicable.

\end{document}